\newtheorem{theorem}{Theorem}           
\newtheorem{lemma}{Lemma}               
\newtheorem{corollary}{Corollary}
\theoremstyle{definition}
\newtheorem{definition}{Definition}
\newtheorem{example}{Example}
\newcommand{\h}{\overline{h}}
\newcommand{\K}{\mathbb{K}}
\newcommand{\R}{\mathcal{R}}
\renewcommand{\L}{\mathcal{L}}
\newcommand{\F}{\mathfrak{F}}
\author{Anna Muranova}
 \address{Anna Muranova: Faculty of Mathematics and Computer Science,
University of Warmia and Mazury,
ul. Sloneczna 54, 10-710 Olsztyn, Poland} 
 \email{anna.muranova@matman.uwm.edu.pl}
\thanks{
\textit{Keywords}: weighted graphs, Laplace operator, non-Archimedean field, ordered field, Levi-Civita field, dual Cheeger constant.\\
\textit{Mathematics Subject Classification 2010: }{05C50,
05C22, 47A75, 12J15, 39A12}}
    \title[Dual Cheeger constant for OF-graphs]{Dual Cheeger constant for weighted graphs over ordered fields}
\begin{document}

\maketitle

\medskip
    \begin{abstract}

We consider a dual Cheeger constant $\h$ for finite graphs with edge weights from an arbitrary real-closed ordered field.  We obtain estimates of $\h$ in terms of number of vertices in graph. Further, we estimate the largest eigenvalue for the discrete Laplace operator in terms of $\h$ and show the sharpness of estimates. As an example we consider graphs over non-Archimedean field of the Levi-Civita numbers.
    
    \end{abstract}

\section*{Introduction}
The Cheeger isoperimetric constant $h$ was firstly introduced for compact Riemannian manifold by Jeff Cheeger \cite{Cheeger} in 1969 and then considered for graphs (e.g. \cite{Chung, Mohar}). The dual Cheeger constant $\overline h$ was introduced directly on weighted graphs in \cite{BauerJost}. The dual Cheeger constant shows how close the graph is to bipartite and it is used in estimation of the largest eigenvalue of discrete Laplacian.  Classical approach to weighted graph assume, that edge weights are positive real numbers. In this paper we consider the model, which was introduced in previous works by author (see \cite{Muranova1, Muranova2, MuranovaPreprint22}), where edge weights are positive elements of an arbitrary  real-closed ordered field $(\K, \succ)$. We introduce the notion of dual Cheeger constant $\overline h$ for finite graphs analogously to the classical case of field $\Bbb R$ and prove its estimates in terms of number $N$ of vertices in graph. We show, that for even $N$ holds
$$
\overline h\succeq \dfrac{N}{2(N-1)}
$$
and for odd $N$ holds
$$
\overline h\succeq \dfrac{N+1}{2N}.
$$
These estimates improve the result in \cite{BauerJost}, which states that $\overline h\ge \dfrac{1}{2}$ for the field $\Bbb R$. The sharpness of our result follows from examples in \cite{BauerJost}, since the equality holds for a complete graph with all the weights equal $1$ over $\mathbb R$(and over any real-closed field, due to the same calculations). 

Further, we show that for any $\mathbb K$ the largest eigenvalue of the Laplace operator can be estimated as 
\begin{equation}\label{eqLambdah}
2 \overline h\preceq \lambda_{N-1} \preceq 1+\sqrt{1-(1-\overline h)^2},
\end{equation}
where the both equalities hold for any bipartite graph.

In Section \ref{LC}, we consider graphs over a non-Archimedean field of the Levi-Civita numbers $\R$ and show, that left inequality in \label{eqLambdah} can hold also for complete graphs, while the right one is precise for them in a sense, that there is a sequence of graphs, for which the difference $|\lambda_{N-1} - 1+\sqrt{1-(1-\overline h)^2}|$ converges in an order topology. 

\section{Preliminaries}

\subsection{Real-closed ordered field}
Firstly we remind a concept of real-closed ordered field. More detailed description can be found in textbooks on Algebra see e. g. \cite{Lang, Waerden}.

\begin{definition}
A field $\K$ is called {\em ordered}, if there exist a subset $\K^+~\subset \K$ with the following properties:
\begin{enumerate}
\item 
$0\not \in \K^+$,
\item
for any non-zero element $k\in \K$ either $k\in \K^+$ or $-k\in \K^+$,
\item
if $k_1, k_2\in \K^+$, then $k_1+k_2\in \K^+$ and $k_1k_2\in \K^+$.

\end{enumerate}
\end{definition}
The total order in the  ordered field is then defined as follows: for $k_1,k_2 \in \K$ we write $k_1\succ k_2$ (or, equivalently, $k_2\prec k_1$) if $(k_1-k_2)\in \K^+$. Further $k_1\succeq k_2$ (equiv. $k_1\preceq k_2$) if $(k_1-k_2)\in \K^+\cup\{0\}$. 

An ordered field $\K$ is {\em non-Archimedean} if there exist an {\em infinitesimal}, i.e. $\epsilon \in \K$ such that 
$$
{\epsilon}\prec \dfrac{1}{n}=\dfrac{1}{\underbrace{1+\dots+1}_{n\mbox{ times}}}
$$
for any $n\in \Bbb N\subset\K$. Otherwise, the field is called {\em Archimedean}. All Archimedean fields are isomorphic to some (not necessary proper) subfield of real numbers. Examples of non-Archimedean fields are ordered field of rational functions \cite[p. A.VI.21]{Bourbaki}, Levi-Civita field \cite{Berz, Hall}, super-real field \cite{DalesHughWoodin}.

By \emph{absolute value of $k\in \K$} we mean $|k|\in \K^+\cup\{0\}$ defined as 
\begin{equation*}
|k|=\begin{cases}
k, \mbox{ if }k\succeq 0\\
-k, \mbox{otherwise.}
\end{cases}
\end{equation*}

The convergence of sequence $\{k_n\}_{n\in \Bbb N}$ to $k$ in ordered topology of the ordered field means by definition, that for any element $a\in \K$ exists $N_0\in \Bbb N$ such that for any $n\ge N_0$ we have
$$
|k_n-k|\preceq a.
$$

There are several equivalent definitions of a real-closed field \cite[p. 38]{DalesHughWoodin}. We remind some of them.\\

An ordered field $\K$ is {\em real-closed} if one of the following holds:
\begin{enumerate}
\item
$\K$ has no proper algebraic extension to an ordered field,
\item
the complexification of $\K$ is algebraically closed,
\item
every positive element in $\K$ has a square root and every polynomial over $\K$ of odd degree has a root in $\K$.
\end{enumerate}

It is well known, that order can be introduced in any real-closed field. Moreover, any ordered field has an algebraic real-closed extension closure, which is unique up to isomorphism, preserving the order (Artin-Schreier Theorem, see \cite[p. 38]{DalesHughWoodin}). Therefore, further in this note we consider exclusively real-closed ordered fields, e.g. algebraic numbers, real numbers, Levi-Civita field.

\subsection{Graphs over ordered field}
The concept of graph over an ordered field was introduced in previous works by author (see \cite{Muranova1, Muranova2}). Graphs over the field $\Bbb R$ of real numbers are classical weighted graphs. Here we shortly remind the definition.  field. 
\begin{definition}
Let $\K$ be a real-closed ordered. \emph{A graph oven an ordered field $K$ (OF-graph)}  is a couple $(V, b)$, where $V$ is a set of vertices (i.e. arbitrary set)
and $b:V\times V\rightarrow \K^+\cup\{0\}$ satisfies the following properties:
\begin{enumerate}
\item
$b(x,y)=b(y,x)$ for any $x,y\in V$,
\item
$b(x,x)= 0$ for any $x\in V$.
\end{enumerate}
\end{definition}
The last property mean that we consider graphs {\em without loops}. We write $x\sim y$ whenever $b(x,y)\ne 0$ and say that there is an {\em edge} between $x$ and $y$. 

A {\em subgraph} of a graph $(V,b)$ is any graph $(U,b\left|_{U\times U}\right.)$, where $U\subset V$.

A {\em path} between any vertices $x,y\in V$ is any sequence $\{x_i\}_{i=1}^n, n\in \Bbb N$ such that
$$
x=x_0\sim x_1\sim x_2\sim \dots \sim x_n=y.
$$
A graph is called {\em connected}, if there exists a path between any two vertices of it.

A {\em connected component} of a graph is any its connected subgraph that is not part of any larger connected subgraph. Therefore, set of vertices of graph is a disjoint union of vertices of all its connected components.

Further in this note we consider finite ($\#V<\infty$) graphs. For the sake of brevity futher we refer to a graph by its set of vertices (e.g. $V$).

A graph is called {\em bipartite}, if there exist a partition $V=V_1\cup V_2$ of vertices,
such that from $x\sim y$ follows that $x\in V_1, y\in V_2$ or $x\in V_2, y\in V_1$.

A graph is called {\em complete}, if $x\sim y$ for any two $x,y\in V$ with $x\ne y$.

The \emph{normalized} weight on edges is defined by
\begin{equation}
p(x,y)=\dfrac{b(x,y)}{b(x)}, \mbox{ for any }x,y \in V,
\end{equation}
where $b(x)=\sum_y b(x,y)$. If $b(x)=0$ then the vertex $x$ is called {\em {isolated vertex}}. Further we consider graphs without isolated vertices, therefore $p(x,y)$ is well defined.

For any $V_1\subset V$ we denote
$$
b(V_1)=\sum_{x\in V_1}b(x)$$
and for any $V_1,V_2\subset V$ with $V_1\cap ~V_2=\emptyset$ we denote
$$
b(V_1,V_2)=\sum_{\substack{x\in V_1,\\y\in V_2}}b(x,y).
$$

\subsection{Laplace operator}
Our main interest is {normalized Laplace operator (Laplacian)} defined on the set of functions
$$
\mathfrak F_V=\{f \mid f:V\to \K\}
$$
by 
\begin{equation}
\mathcal L f(x)=\sum_{y\in V} (f(x)-f(y))\dfrac{b(x,y)}{b(x)}=\sum_{y\in V} (f(x)-f(y))p(x,y).
\end{equation}
An {\em eigenvalue} of $\mathcal L$ is defined as $\lambda$ such that $\mathcal Lf=\lambda v$, where $v$ is called {\em eigenfunction}.
It is proven in \cite{MuranovaPreprint22} that all $N=\#V$ eigenvalues of the Laplacian for a connected graph belong to the same field $\K$ as edge weights and all the eigenfunctions are in $\mathfrak F_V$. Moreover, it is shown there that the largest eigenvalue of the connected graph
satisfies
 $$
\dfrac{N}{N-1}\preceq\lambda_{N-1}\preceq 2
$$ and $\lambda_{N-1}=2$ if and only if the graph is bipartite.

For non-connected graph the eigenvalues are union of eigenvalues of its connected components (with multiplicities) and eigenfunctions are extension of eigenfunctions of connected components to the whole graph, which are $0$ on the other components.

Here and further we order eigenvalues of Laplacian as 
$$
0=\lambda_0\preceq \lambda_1\preceq\lambda_2\preceq \dots \preceq \lambda_{N-1}\preceq 2.
$$
 The multiplicity of the eigenvalue $0$ is equal to the number of connected components of the graph.

For any function $f\in \mathfrak F_V$ we denote  
$$
V_f^+=\{x\in V\mid f(x)\succ 0\}
$$ 
and
$$
V_f^-=\{x\in V\mid f(x)\prec 0\}.
$$
We introduce an analogue of scalar product on $\F_V$ by 
$$
\langle f,g\rangle=\sum_{x\in V}f(x)g(x)b(x).
$$
for any $f,g \in \F_V$.
The Green formula (see e.g. \cite{Muranova1}) states, that  
$$
\langle \L f,g\rangle=\dfrac{1}{2}\sum_{x,y\in V}(f(x)-f(y))(g(x)-g(y))b(x,y).
$$

\section{Dual Cheeger constant}
The dual Cheeger constant for weighted graphs was introduced in \cite{BauerJost}. Here we introduce it in the same way for OF-graphs.
\begin{definition}
Let $V$ be an OF-graph with $N=\#V$ vertices. Then its {\em dual Cheeger constant} is defined by
\begin{equation}\label{dualCheeger}
\overline h=\max_{\substack{\emptyset \ne V_1\subset V,\\ \emptyset \ne V_2\subset V,\\V_1\cap V_2=\emptyset}}\dfrac{2b(V_1,V_2)}{b(V_1)+b(V_2)},
\end{equation}
i.e. the maximum is taken over all pairs of non-empty disjoint subsets of $V$.
\end{definition}

\begin{lemma}
For any connected graph the dual Cheeger constant satisfies
\begin{equation}\label{hpreceq}
\overline h\preceq 1.
\end{equation}
Moreover, $\overline h=1$ if and only if the graph is bipartite.
\end{lemma}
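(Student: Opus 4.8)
The plan is to reduce the inequality to a single ``mass-balance'' identity obtained by bookkeeping where the edges incident to $V_1$ and $V_2$ go. Since $\overline h$ is a maximum over the finite collection of admissible pairs $(V_1,V_2)$ of disjoint non-empty subsets, it is attained, and it suffices to prove $2b(V_1,V_2)\preceq b(V_1)+b(V_2)$ for every such pair, together with a characterisation of when equality occurs.

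First I would fix such a pair and set $V_3=V\setminus(V_1\cup V_2)$. Expanding each degree $b(x)=\sum_{y\in V}b(x,y)$ according to the partition $V=V_1\cup V_2\cup V_3$ and summing over $x\in V_1$ gives
$$
b(V_1)=\sum_{x,y\in V_1}b(x,y)+b(V_1,V_2)+b(V_1,V_3),
$$
and symmetrically for $b(V_2)$. Adding the two relations and using $b(x,y)=b(y,x)$, so that the cross term $b(V_1,V_2)$ is counted twice, yields
$$
b(V_1)+b(V_2)-2b(V_1,V_2)=\sum_{x,y\in V_1}b(x,y)+\sum_{x,y\in V_2}b(x,y)+b(V_1,V_3)+b(V_2,V_3).
$$
Every summand on the right lies in $\K^+\cup\{0\}$, so the right-hand side is $\succeq 0$; this gives $2b(V_1,V_2)\preceq b(V_1)+b(V_2)$ for each admissible pair, and hence $\overline h\preceq 1$.

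For the equality statement I would analyse precisely when the right-hand side of the identity vanishes. In an ordered field a sum of elements of $\K^+\cup\{0\}$ equals $0$ only when every summand is $0$, so $\overline h=1$ forces, for a maximising pair,
$$
\sum_{x,y\in V_1}b(x,y)=\sum_{x,y\in V_2}b(x,y)=b(V_1,V_3)=b(V_2,V_3)=0.
$$
Here connectedness enters, which I expect to be the only genuine subtlety: the last two equalities say there are no edges between $V_1\cup V_2$ and $V_3$, and since $V_1\cup V_2\ne\emptyset$, connectedness forces $V_3=\emptyset$. Then $V=V_1\cup V_2$ has no edges inside $V_1$ or inside $V_2$, which is exactly a bipartition, so the graph is bipartite. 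Conversely, if $V=V_1\cup V_2$ is a bipartition then $V_3=\emptyset$ and the internal sums vanish, so the identity shows the corresponding quotient equals $1$, whence $\overline h=1$; both $V_1$ and $V_2$ are non-empty since a connected graph with at least one edge splits each endpoint of that edge into the two sides of the bipartition.

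The inequality itself is therefore routine once the identity is set up; the step requiring care is the equality direction, where the vanishing of $b(V_1,V_3)$ and $b(V_2,V_3)$ must be combined with connectedness to eliminate the ``leftover'' set $V_3$ and recover a genuine bipartition on all of $V$.
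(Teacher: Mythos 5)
Your proof is correct and follows essentially the same route as the paper: the inequality comes from comparing $2b(V_1,V_2)$ with the degree sums $b(V_1)+b(V_2)$ (your mass-balance identity just makes explicit what the paper dismisses as immediate), and the equality case is handled identically, by forcing the internal and $V_3$-terms to vanish and invoking connectedness to conclude $V_3=\emptyset$. Your write-up is merely more detailed, which is fine.
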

The proof follows the same outline as in \cite{BauerJost}.
\begin{proof}
 The inequality \eqref{hpreceq} follows immediately, since $b(V_1,V_2)\preceq b(V_l)$ for $l=1,2.$

Further, let $V$ is bipartite with the partition $V=V_1\cup V_2$. Then equality in \eqref{hpreceq} is attained on this partition. 
Let now $\overline h=1$. Then 
$$
2b(V_1,V_2)=b(V_1)+b(V_2),
$$
i.e. there is no edges between the vertices from the same $V_l$, for $l=1,2$. Moreover, since the graph is connected, $V_3=\emptyset$. Therefore, the graph is bipartite and $V=V_1\cup V_2$ is its partition.
\end{proof}

\begin{theorem}\label{MainThm1}

\textnormal {(a)} For a graph $V$ with $N=2K\ge 2$ vertices and at least one edge, we have
$$
\overline h \succeq \frac{N}{2(N-1)}.
$$
\textnormal {(b)}  For a graph $V$ with $N=2K+1>2$ vertices and at least one edge, we have
$$
\overline h \succeq \dfrac{N+1}{2N}.
$$
\end{theorem}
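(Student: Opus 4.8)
The plan is to exploit that $\overline h$ is defined as a maximum over finitely many disjoint pairs $(V_1,V_2)$, so any single admissible pair yields a lower bound; our task is therefore to construct one pair whose ratio is large enough. The first simplification is to restrict attention to pairs that \emph{partition} $V$, i.e.\ $V_2=V\setminus V_1$. For such a pair the denominator collapses to the constant $b(V_1)+b(V_2)=b(V)=\sum_{x\in V}b(x)$, which equals $2W$, where $W=\sum_{\{x,y\}}b(x,y)$ is the total edge weight; since the graph has at least one edge, $W\succ 0$. Consequently, writing the crossing weight as $b(V_1,V_2)$, we obtain
$$
\frac{2b(V_1,V_2)}{b(V_1)+b(V_2)}=\frac{b(V_1,V_2)}{W},
$$
so the problem reduces to producing a partition whose cut $b(V_1,V_2)$ captures a prescribed fraction of $W$.

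To find such a partition I would average the cut over all balanced bipartitions. For $N=2K$ I would sum $b(S,V\setminus S)$ over all subsets $S\subseteq V$ with $\#S=K$, and for $N=2K+1$ over all $S$ with $\#S=K$ (so $\#(V\setminus S)=K+1$). Interchanging the order of summation gives
$$
\sum_{\#S=K}b(S,V\setminus S)=\sum_{\{x,y\}}b(x,y)\cdot\#\{S:\ \#S=K,\ \#(\{x,y\}\cap S)=1\}.
$$
By symmetry the inner count is the same for every edge $\{x,y\}$, and a direct binomial computation yields $2\binom{2K-2}{K-1}$ in the even case and $2\binom{2K-1}{K-1}$ in the odd case. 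Dividing by the number of subsets, namely $\binom{2K}{K}$ resp.\ $\binom{2K+1}{K}$, the average value of $b(S,V\setminus S)$ equals $p\,W$, where $p=\frac{K}{2K-1}=\frac{N}{2(N-1)}$ in the even case and $p=\frac{K+1}{2K+1}=\frac{N+1}{2N}$ in the odd case.

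To conclude I would invoke the elementary fact, valid over any ordered field, that the maximum of a finite family of elements is $\succeq$ their arithmetic mean: if every term were $\prec$ the mean, then summing would contradict the definition of the mean (division by the positive integer count being legitimate since $\K$ has characteristic zero). Hence some admissible $S$ satisfies $b(S,V\setminus S)\succeq p\,W$, and taking $V_1=S$, $V_2=V\setminus S$ gives $\overline h\succeq b(V_1,V_2)/W\succeq p$, which is exactly the claimed bound in each parity. Both parts $S$ and $V\setminus S$ are nonempty because $K\ge 1$, so the pair is admissible.

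The main obstacle is not analytic but bookkeeping: one must notice that passing from general disjoint pairs to genuine partitions is precisely what makes the denominator constant (and hence turns the ratio into a pure cut fraction), and one must carry out the binomial counts correctly so that the averages come out to exactly the extremal fractions realized by the equally weighted complete graph. The transfer to an arbitrary real-closed ordered field causes no difficulty, since every ingredient---the summation identity, the binomial coefficients, and the ``maximum dominates the average'' step---is finite and algebraic, using only that $W\succ 0$ and that positive integers are invertible in $\K$.
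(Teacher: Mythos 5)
Your proposal is correct and is essentially the paper's own argument: the paper likewise restricts to balanced partitions (size $K$/$K$ in the even case, $K$/$K{+}1$ in the odd case), double-counts each edge weight across all such partitions via the binomial coefficient $\binom{N-2}{K-1}$, and concludes by summing the inequalities $\overline h(V_1,V_2)\preceq \overline h$, which is exactly your ``maximum dominates the average'' step. The only cosmetic difference is that you count unordered edges (hence the factor $2\binom{2K-2}{K-1}$) while the paper counts ordered pairs.
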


\begin{proof}
(a) Let us consider only such $V_1, V_2\subset V$  that $V=V_1\cup V_2$ is a partition of the graph with  $\#V_1=\# V_2=K$. There are ${N\choose K}=\frac{N!}{K!(N-K)!}$ such partitions.  For each such $V_1, V_2$ we define
$$
\overline h (V_1,V_2):=\frac{2b(V_1,V_2)}{b(V_1)+b(V_2)}.
$$
therefore $\overline h\succeq \overline h(V_1,V_2)$.

For every such a partition $V=V_1\cup V_2$ we have 
$$
\overline h(V_1,V_2)=\frac{2\sum_{x\in V_1,y\in V_2}b(x,y)}{b(V)}=\frac{2\sum_{x\in V_1,y\in V_2}b(x,y)}{\sum_{x,y\in V}b(x,y)},
$$
and thus
\begin{equation}\label{thm1ineq}
\frac{2\sum_{x\in V_1,y\in V_2}b(x,y)}{\sum_{x,y\in V}b(x,y)}\preceq \overline h
\end{equation}
Note that $b({x_0},y_0)$ for some $x_0,y_0\in V$ is in numerator if and only if ${x_0}\in V_1$ and ${y_0}\in V_2$. It's easy to count that among all the considered partitions
there are ${{N-2}\choose {K-1}}$ such partitions for any given ordered pair $(x_0,y_0)$ . 
Now we sum up all inequalities \eqref{thm1ineq}, rewrite the numerator using the last observation and obtain
$$
{{N-2}\choose {K-1}}\frac{2\sum_{x_0,y_0\in V}b(x_0,y_0)}{\sum_{x,y\in V}b(x,y)}\preceq\overline h {N\choose K}
$$
from where follows
$$
2{{N-2}\choose {K-1}}\preceq\overline h {N\choose K}
$$
and, further,
$$
\overline h \succeq \dfrac{2(N-K)K}{N(N-1)}=\dfrac{K}{(N-1)}=\dfrac{N}{2(N-1)},
$$
since $N=2K$.


(b) The idea of the proof is the same, as in the part (a), but the corresponding partitions are $\#V_1=K, \# V_2=K+1$. Then, following the same outline as in (a), we get
$$
\overline h \succeq \dfrac{2(N-K)K}{N(N-1)}=\dfrac{N-K}{N}=\dfrac{N+1}{2N},
$$
since $N=2K+1$.

\end{proof}

Therefore, the following Corollary, which improves the result $\overline h\ge\frac{1}{2}$ in \cite{BauerJost}, holds:
\begin{corollary}
For any finite graph with at least one edge
$$
\overline h \succ \dfrac{1}{2}.
$$
\end{corollary}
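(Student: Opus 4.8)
The plan is to derive this directly from Theorem \ref{MainThm1} by a short case analysis on the parity of $N = \#V$, so the argument reduces to a one-line computation in each case. First I would observe that the standing hypothesis that the graph has at least one edge forces $N \geq 2$: an edge joins two distinct vertices, so a graph on fewer than two vertices (in particular the single-vertex graph, which is the only loopless graph on one vertex) carries no edge. Consequently every admissible graph falls under exactly one branch of Theorem \ref{MainThm1}: either $N = 2K$ with $N \geq 2$, covered by part (a), or $N = 2K+1$ with $N \geq 3 > 2$, covered by part (b). These two branches together exhaust all integers $N \geq 2$.

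Next I would invoke the two estimates. In the even case part (a) gives $\overline h \succeq \frac{N}{2(N-1)}$, and subtracting yields the identity $\frac{N}{2(N-1)} - \frac{1}{2} = \frac{1}{2(N-1)}$. In the odd case part (b) gives $\overline h \succeq \frac{N+1}{2N}$, and likewise $\frac{N+1}{2N} - \frac{1}{2} = \frac{1}{2N}$. Thus in both cases the lower bound from the theorem exceeds $\frac{1}{2}$ by the displayed residual fraction, and it remains only to check that this residual is a genuinely positive element of $\K$.

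The only point requiring a word of care --- and it is not really an obstacle, since the inequalities of Theorem \ref{MainThm1} already live in the ordered field --- is this positivity. Since $N$ is the natural number $\#V$ identified with $1 + \dots + 1 \in \K$ and $N \geq 2$, we have $N - 1 \succeq 1 \succ 0$ and $N \succ 0$, whence both $\frac{1}{2(N-1)}$ and $\frac{1}{2N}$ belong to $\K^+$. Combining this positivity with the two identities above gives $\overline h \succeq \frac{1}{2} + \frac{1}{2(N-1)} \succ \frac{1}{2}$ in the even case and $\overline h \succeq \frac{1}{2} + \frac{1}{2N} \succ \frac{1}{2}$ in the odd case, which is the claimed strict inequality and completes the proof.
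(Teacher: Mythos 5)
Your proof is correct and follows the same route the paper intends: the corollary is stated as an immediate consequence of Theorem \ref{MainThm1}, since both lower bounds $\frac{N}{2(N-1)}$ and $\frac{N+1}{2N}$ strictly exceed $\frac{1}{2}$. Your additional care in checking that at least one edge forces $N\ge 2$ (so the two parity cases of the theorem exhaust all possibilities) and that the residual terms $\frac{1}{2(N-1)}$, $\frac{1}{2N}$ are positive elements of $\K$ is exactly the bookkeeping the paper leaves implicit.
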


\section{Largest eigenvalue of the graph Laplacian}
In this section we show the relation between dual Cheeger constant and the largest eigenvalue $\lambda_{N-1}$ of the discrete Laplacian for the finite graph over an ordered field. For the field $\Bbb R$ it is done in \cite{BauerJost} and the result is similar to the result, which relate Cheeger constant and smallest non-zero eigenvalue (see e. g. \cite{Chung}, \cite{Grigoryan} for $\Bbb R$ and \cite{MuranovaPreprint22} for ordered fields).

\begin{theorem}\label{thmlambda}
The largest eigenvalue $\lambda_{N-1}$ of the graph Laplacian satisfies
$$
2 \overline h\preceq \lambda_{N-1} \preceq 1+\sqrt{1-(1-\overline h)^2}.
$$
\end{theorem}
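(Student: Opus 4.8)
The plan is to prove the two bounds separately, both resting on the variational description of $\lambda_{N-1}$. By the spectral decomposition of the self-adjoint operator $\L$ over the real-closed field $\K$ (the eigenvalues lie in $\K$ and the eigenfunctions form a $\langle\cdot,\cdot\rangle$-orthogonal basis of $\F_V$, as established in the cited prior work), for every nonzero $f\in\F_V$ one has $\langle\L f,f\rangle\preceq\lambda_{N-1}\langle f,f\rangle$. Using the Green formula this reads $\lambda_{N-1}\succeq\frac{\frac12\sum_{x,y}(f(x)-f(y))^2 b(x,y)}{\sum_x f(x)^2 b(x)}$ for all $f$, and, applying the identity $(a+b)^2+(a-b)^2=2a^2+2b^2$ to a top eigenfunction $f$, one obtains the dual identity $2-\lambda_{N-1}=\frac{\frac12\sum_{x,y}(f(x)+f(y))^2 b(x,y)}{\sum_x f(x)^2 b(x)}$. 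These two formulas are the engine for the lower and upper bounds respectively.

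For the lower bound $2\h\preceq\lambda_{N-1}$ I would take a pair of disjoint nonempty sets $(V_1,V_2)$ realizing the maximum in the definition of $\h$ and insert the test function $f=\mathbf 1_{V_1}-\mathbf 1_{V_2}$ (that is, $f\equiv 1$ on $V_1$, $f\equiv -1$ on $V_2$, $f\equiv 0$ elsewhere) into the Rayleigh quotient. A direct computation gives $\frac12\sum_{x,y}(f(x)-f(y))^2 b(x,y)=4b(V_1,V_2)+b(V_1,V_3)+b(V_2,V_3)$ and $\langle f,f\rangle=b(V_1)+b(V_2)$, where $V_3=V\setminus(V_1\cup V_2)$; discarding the nonnegative terms $b(V_1,V_3),b(V_2,V_3)$ yields $\lambda_{N-1}\succeq\frac{4b(V_1,V_2)}{b(V_1)+b(V_2)}=2\h$. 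This direction is routine.

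The harder direction is the upper bound, a dual Cheeger inequality. Starting from a top eigenfunction $f$ and the identity for $2-\lambda_{N-1}$, I would set $g=|f|$, record the sign $\sigma(x)$ of each $f(x)$, and sweep a threshold $t\succeq 0$, forming the disjoint pair $V_1(t)=\{x:f(x)\succeq t\}$ and $V_2(t)=\{x:f(x)\preceq -t\}$. The crucial step is a discrete co-area computation: since $g$ takes finitely many values, the integration over $t$ against $d(t^2)$ becomes a finite sum, and one checks that $\sum_t\big(b(V_1(t))+b(V_2(t))\big)\,\Delta(t^2)=\langle f,f\rangle$, while the total defect $\sum_t\big(b(V_1(t))+b(V_2(t))-2b(V_1(t),V_2(t))\big)\,\Delta(t^2)$, after splitting edges into same-sign and opposite-sign ones, is bounded above by $\sum_{x\sim y}|f(x)+f(y)|\,(g(x)+g(y))\,b(x,y)$. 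Applying the Cauchy--Schwarz inequality (valid in any ordered field) to this edge-sum and invoking the identity for $2-\lambda_{N-1}$ controls it by $\sqrt{\lambda_{N-1}(2-\lambda_{N-1})}\,\langle f,f\rangle$ up to the correct constant; since the minimum of a family of ratios is $\preceq$ the ratio of the corresponding weighted sums, this produces a threshold $t_0$ with $1-\h(V_1(t_0),V_2(t_0))\preceq\sqrt{\lambda_{N-1}(2-\lambda_{N-1})}$, whence $1-\h\preceq\sqrt{\lambda_{N-1}(2-\lambda_{N-1})}$. Squaring (both sides are $\succeq 0$ since $\h\preceq 1$ by the Lemma and $\lambda_{N-1}\preceq 2$) and using $\lambda_{N-1}\succeq\frac{N}{N-1}\succ 1$ rearranges this, via $\lambda_{N-1}(2-\lambda_{N-1})=1-(1-\lambda_{N-1})^2$, into $\lambda_{N-1}\preceq 1+\sqrt{1-(1-\h)^2}$.

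I expect the main obstacle to be obtaining the sharp constant in the co-area/Cauchy--Schwarz step: crude estimates such as $\max(g(x),g(y))^2\preceq(g(x)+g(y))^2$ together with $(g(x)+g(y))^2\preceq 2(g(x)^2+g(y)^2)$ only deliver $1-\h\preceq\sqrt{2(2-\lambda_{N-1})}$, which is weaker than required, so the cross terms must be retained to recover the exact factor $\sqrt{\lambda_{N-1}(2-\lambda_{N-1})}$. A secondary, field-specific point is justifying each classical ingredient purely algebraically: the existence of the top eigenfunction in $\F_V$ and of the spectral basis (from real-closedness and the cited results), the existence of the square root in $\K$ (guaranteed since $\K$ is real-closed and $\lambda_{N-1}(2-\lambda_{N-1})\succeq 0$), and the replacement of integration in $t$ by a finite summation over the values of $|f|$, which is legitimate precisely because $V$ is finite.
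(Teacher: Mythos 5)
Your lower bound is correct and is the standard argument (the paper, following Bauer--Jost, assembles it the same way: plug the test function $\mathbf{1}_{V_1}-\mathbf{1}_{V_2}$ for an optimal pair into the variational characterization of $\lambda_{N-1}$). The genuine gap is in the upper bound, exactly at the step you yourself flag as ``the main obstacle'': the Cauchy--Schwarz pairing you propose provably cannot produce the constant $\sqrt{\lambda_{N-1}(2-\lambda_{N-1})}$. With $g=|f|$, your pairing bounds the swept defect by
$$
\Bigl(\tfrac12\sum_{x,y}(f(x)+f(y))^2 b(x,y)\Bigr)^{1/2}
\Bigl(\tfrac12\sum_{x,y}(g(x)+g(y))^2 b(x,y)\Bigr)^{1/2},
$$
and for the top eigenfunction the first factor equals $\sqrt{(2-\lambda_{N-1})\langle f,f\rangle}$, so the sharp constant would require the second factor to be $\preceq\sqrt{\lambda_{N-1}\langle f,f\rangle}$. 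The opposite inequality holds: on every edge $(|f(x)|-|f(y)|)^2\preceq (f(x)+f(y))^2$, hence $\langle\mathcal{L}g,g\rangle\preceq(2-\lambda_{N-1})\langle f,f\rangle$, and therefore
$$
\tfrac12\sum_{x,y}(g(x)+g(y))^2b(x,y)=2\langle f,f\rangle-\langle\mathcal{L}g,g\rangle\succeq\lambda_{N-1}\langle f,f\rangle,
$$
with strict inequality as soon as some edge joins two nonzero values of equal sign. Concretely, on the unweighted $5$-cycle one has $\lambda_{4}=1-\cos(4\pi/5)\approx 1.809$, the eigenfunction $f(j)=\cos(4\pi j/5)$ has the same-sign edge $\{2,3\}$, and your chain bounds $1-\overline h$ only by $\approx 0.600$, whereas the theorem demands the bound $\sqrt{\lambda_4(2-\lambda_4)}\approx 0.588$; a scheme whose output is already weaker than the target on one example cannot prove the target. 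Saying ``the cross terms must be retained'' names the difficulty, it does not resolve it. (A smaller slip: at $t=0$ your sets $V_1(t)$ and $V_2(t)$ both contain the zero set of $f$, so the sweep must run over $t\succ 0$.)

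What actually closes the argument is to replace the signless form of $|f|$ by the form built from the positive and negative parts of $f$; this is where the paper's Lemma, and the outline of Theorem 3.2 of \cite{BauerJost} that the paper invokes for the assembly, enter. On same-sign edges the swept defect equals $f(x)^2+f(y)^2$ exactly, so split the Cauchy--Schwarz factors by edge type: take $\sqrt{f(x)^2+f(y)^2}$ and $\sqrt{f(x)^2+f(y)^2}$ on same-sign edges, and $|f(x)+f(y)|$, $|f(x)-f(y)|$ on the remaining ones. The two resulting square sums are $Q=(2-\lambda_{N-1})\langle f,f\rangle-s$ and $Q'=\lambda_{N-1}\langle f,f\rangle+s$, where $s=\sum_{\text{same sign}}2f(x)f(y)b(x,y)\succeq 0$, and then $QQ'\preceq \lambda_{N-1}(2-\lambda_{N-1})\langle f,f\rangle^2$ precisely because $\lambda_{N-1}\succeq 1$; this recovers the sharp constant. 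Equivalently, $Q=\sum_{x,y}(f_+(x)-f_-(y))^2b(x,y)$ is the Dirichlet form of the antisymmetric lift of $f$ to the bipartite double cover, where your pairs $(V_1(t),V_2(t))$ become level sets of the lifted function's positive part, and one may apply the paper's Lemma there. Note also the mechanism by which the paper's Lemma gets its own sharp constant: after Cauchy--Schwarz the second factor is not estimated independently but re-expressed through the very Rayleigh quotient $W$ being bounded, yielding $W(2-W)\succeq h^2(f)$; for your two-sided $g=|f|$ that re-expression is unavailable, which is another way of seeing why your version of the step fails. The rest of your scaffolding --- finite summation in place of integration, square roots from real-closedness, and the variational identities for $\lambda_{N-1}$ and $2-\lambda_{N-1}$ from \cite{MuranovaPreprint22} --- is sound and coincides with what the paper uses.
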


For the proof of this theorem we need the following important lemma, which is proved in \cite{BauerJost} for $\Bbb K=\Bbb R$ , but there the proof uses integrals.
Here we present the proof of the lemma, which is analogous to the prove of Theorem 2 in \cite{MuranovaPreprint22} and uses the technique, used in \cite{Chung} for the proof of the relation between the smallest non-zero eigenvalue of Laplacian and Cheeger constant.

\begin{lemma}
Let $(V,b)$ be a graph without isolated vertices and $f\in \F_V$ be such that $V^+_f\ne\emptyset$ and $V\setminus V^+_f\ne\emptyset$. Let
\begin{equation*}
h(f)=\min_{\emptyset\ne S\subset V^+_f}\dfrac{b(S, V\setminus S)}{b(S)}
\end{equation*}
and 
$$
f_+(x)=\begin{cases}
f(x), x\in V^+_f,\\
0, \mbox{ otherwise},
\end{cases}
$$
then
\begin{equation*}
1-\sqrt{1-h^2(f)}\preceq \dfrac{\langle\L f_+, f_+\rangle}{\langle f_+, f_+\rangle} \preceq 1+\sqrt{1-h^2(f)}.
\end{equation*}

\end{lemma}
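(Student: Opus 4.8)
The plan is to run the standard Cheeger-type argument of Chung \cite{Chung}, replacing the co-area \emph{integral} used for $\K=\Bbb R$ in \cite{BauerJost} by a discrete summation over level sets, and to check that each analytic step survives over a real-closed ordered field. Throughout write $g=f_+$ and recall from Green's formula that $\langle\L g,g\rangle=\frac12\sum_{x,y\in V}(g(x)-g(y))^2b(x,y)$, while $\langle g,g\rangle=\sum_{x\in V_f^+}g(x)^2b(x)\succ 0$ (strictly positive since $V_f^+\ne\emptyset$ and there are no isolated vertices, so $b(x)\succ 0$). Denote the Rayleigh quotient by $R=\langle\L g,g\rangle/\langle g,g\rangle$; the goal is to reach $(R-1)^2\preceq 1-h^2(f)$, which is equivalent to the claimed two-sided bound once we know $1-h^2(f)\succeq 0$ and extract its square root.

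First I would establish a discrete co-area identity. Order the vertices of $V_f^+=\{x_1,\dots,x_m\}$ so that $g(x_1)\succeq g(x_2)\succeq\dots\succeq g(x_m)\succ 0$ (possible since we only ever sort finitely many field elements), set $g(x_{m+1})=0$, and put $S_i=\{x_1,\dots,x_i\}$, each a non-empty subset of $V_f^+$. Writing $g^2(x)-g^2(y)$ as a telescoping sum along the ordering shows that an edge contributes at level $S_i$ exactly when it crosses $S_i$, which yields
\[
\frac12\sum_{x,y\in V}|g^2(x)-g^2(y)|\,b(x,y)=\sum_{i=1}^{m}\bigl(g^2(x_i)-g^2(x_{i+1})\bigr)\,b(S_i,V\setminus S_i).
\]
By the definition of $h(f)$ we have $b(S_i,V\setminus S_i)\succeq h(f)\,b(S_i)$ for every $i$, and an interchange of summation order (Abel summation) gives $\sum_i(g^2(x_i)-g^2(x_{i+1}))b(S_i)=\sum_j g^2(x_j)b(x_j)=\langle g,g\rangle$. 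Combining these yields the key inequality $\sum_{x,y\in V}|g^2(x)-g^2(y)|\,b(x,y)\succeq 2h(f)\langle g,g\rangle$.

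Next I would factor $g^2(x)-g^2(y)=(g(x)-g(y))(g(x)+g(y))$ and apply Cauchy--Schwarz to the weighted sum. This step is available here precisely because $\K$ is real-closed: every $b(x,y)\succeq 0$ has a square root, so one applies the inequality to the vectors with entries $|g(x)-g(y)|\sqrt{b(x,y)}$ and $|g(x)+g(y)|\sqrt{b(x,y)}$, and Cauchy--Schwarz itself is the Lagrange identity expressing the product of two sums of squares minus the squared inner product as a sum of squares, hence $\succeq 0$. This gives
\[
\Bigl(\sum_{x,y\in V}|g^2(x)-g^2(y)|\,b(x,y)\Bigr)^2\preceq\Bigl(\sum_{x,y\in V}(g(x)-g(y))^2b(x,y)\Bigr)\Bigl(\sum_{x,y\in V}(g(x)+g(y))^2b(x,y)\Bigr).
\]
The two right-hand factors are elementary: the first equals $2\langle\L g,g\rangle$, and expanding $(g(x)+g(y))^2=2g^2(x)+2g^2(y)-(g(x)-g(y))^2$ together with $\sum_y b(x,y)=b(x)$ shows the second equals $4\langle g,g\rangle-2\langle\L g,g\rangle$.

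Feeding the lower bound from the first step into the left-hand side and dividing through by $\langle g,g\rangle^2\succ 0$ collapses everything to $h^2(f)\preceq R(2-R)$, i.e. $(R-1)^2\preceq 1-h^2(f)$. Since $b(S,V\setminus S)\preceq b(S)$ always holds, we get $0\preceq h(f)\preceq 1$, so $1-h^2(f)\succeq 0$ has a square root by real-closedness; applying the (order-preserving) square root gives $|R-1|\preceq\sqrt{1-h^2(f)}$, which is exactly the asserted bound. I expect the main obstacle to lie entirely in the first step: transcribing the continuous co-area formula of \cite{BauerJost} into the purely algebraic level-set telescoping above, so that no integration or limiting process appears and the argument is valid verbatim over any real-closed ordered field. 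The Cauchy--Schwarz and square-root steps then go through because real-closedness supplies exactly the square roots and the sum-of-squares positivity that the real case uses implicitly.
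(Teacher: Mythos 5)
Your proposal is correct and follows essentially the same route as the paper's own proof: Green's formula, the weighted Cauchy--Schwarz inequality, and the discrete level-set (telescoping/Abel summation) argument bounding $\sum_{x,y}|f_+^2(x)-f_+^2(y)|\,b(x,y)$ from below by $2h(f)\langle f_+,f_+\rangle$, all collapsing to the quadratic inequality $(W-1)^2\preceq 1-h^2(f)$. The only differences are presentational (you state the co-area identity and Abel summation up front, while the paper interleaves them inside the estimate of the numerator, ordering all $N$ vertices increasingly rather than just $V_f^+$ decreasingly), plus your helpful explicit checks that $\langle f_+,f_+\rangle\succ 0$, that $0\preceq h(f)\preceq 1$, and that real-closedness supplies the order-preserving square roots.
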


\begin{proof}
Let us denote
\begin{equation*}
W:=\dfrac{\langle\L f_+, f_+\rangle}{\langle f_+, f_+\rangle}.
\end{equation*}
By Green formula we can rewrite $W$ as 
\begin{equation*}
W=\dfrac{\frac{1}{2}\sum_{x,y\in V}(f_+(x)-f_+(y))^2b(x,y)}{\sum_{x\in V}f_+^2(x)b(x)}.
\end{equation*}
Then multiplying the numerator and the denominator by $\sum_{x,y\in V}(f_+(x)+f_+(y))^2b(x,y)$ and using Cauchy-Schwarz inequality we get
\begin{equation}\label{W1}
W\succeq \dfrac{\frac{1}{2}\left(\sum_{x,y\in V}b(x,y)\left|f_+^2(x)-f_+^2(y)\right|\right)^2}{\left(\sum_{x\in V}f_+^2(x)b(x)\right)\left(\sum_{x,y\in V}(f_+(x)+f_+(y))^2b(x,y)\right)}.
\end{equation}
From the other hand,
\begin{align*}
&W\cdot {\sum_{x\in V}f_+^2(x)b(x)}=\langle\L f_+,f_+\rangle\\
&=2\sum_{x\in V}f_+^2(x)b(x)-\left(\sum_{x\in V}f_+^2(x)b(x)+\sum_{x,y\in V}f_+(x)f_+(y)b(x,y)\right)\\
&=2\sum_{x\in V}f_+^2(x)b(x)-\left(\sum_{x,y\in V}f_+(x)(f_+(x)+f_+(y))b(x,y)\right)
\\
&=2\sum_{x\in V}f_+^2(x)b(x)-\left(\sum_{x,y\in V}f_+(y)(f_+(x)+f_+(y))b(x,y)\right)\\
&=2\sum_{x\in V}f_+^2(x)b(x)-\frac12\left(\sum_{x,y\in V}(f_+(x)+f_+(y))^2b(x,y)\right),
\end{align*}
where in the last two lines we have switched notations for $x,y$ in the second sum and sum up two lines and divided by two in the last line. Using this in denominator of \eqref{W1} we get
\begin{equation}\label{W2}
W\succeq\dfrac{\frac{1}{2}\left(\sum_{x,y\in V}b(x,y)\left|f_+^2(x)-f_+^2(y)\right|\right)^2}{\left(\sum_{x\in V}f_+^2(x)b(x)\right)^2\left(4-2 W\right)}.
\end{equation}
Now our goal is to estimate the numerator of \eqref{W2}
$$
A:=\frac{1}{2}\left(\sum_{x,y\in V}b(x,y)\left|f_+^2(x)-f_+^2(y)\right|\right)^2
$$ 
in terms of $h(f)$.
Firstly, let us order all the vertices of the graph as $x_1,x_2\dots,x_N$ in the way that $f(x_i)\preceq f(x_{i+1})$. Further, let us denote 
$$
S_k=\{x_{k+1},\dots,x_N\}\subset~V\;\;\;\mbox{ and }\;\;\;n_0=\max_i\{i\mid x_i\not \in V^+_f, 0\le i<N \},
$$
i.e. for any $k\ge n_0$ we have $S_k\subset~V^+_f$.

Now we can rewrite $A$ as
\begin{align*}
A&=2\left(\sum_{i=1}^{N-1}\sum_{j=i+1}^N b(x_i,x_j)\left(f_+^2(x_j)-f_+^2(x_i)\right)\right)^2\\
&=2\left(\sum_{i=1}^{N-1}\sum_{j=i+1}^N b(x_i,x_j)\sum_{k=i}^{j-1}\left(f_+^2(x_{k+1})-f_+^2(x_k)\right)\right)^2\\
&=2\left(\sum_{i=1}^{N-1}\sum_{j=i+1}^N \sum_{k=i}^{j-1}b(x_i,x_j)\left(f_+^2(x_{k+1})-f_+^2(x_k)\right)\right)^2\\
&=[\mbox{change limits of summation for $k$ and $j$}]\\
&=2\left(\sum_{i=1}^{N-1}\sum_{k=i}^{N-1} \sum_{j=k+1}^{N}b(x_i,x_j)\left(f_+^2(x_{k+1})-f_+^2(x_k)\right)\right)^2\\
&=[\mbox{change limits of summation for $i$ and $k$}]\\
&=2\left(\sum_{k=1}^{N-1}\underbrace{\sum_{i=1}^{k} \sum_{j=k+1}^{N}b(x_i,x_j)}_{b(S_k,V\setminus S_k)}\left(f_+^2(x_{k+1})-f_+^2(x_k)\right)\right)^2\\
&=2\left(\sum_{k=1}^{N-1}{b(S_k,V\setminus S_k)}\left(f_+^2(x_{k+1})-f_+^2(x_k)\right)\right)^2\\
&=2\left(\sum_{k=n_0}^{N-1}{b(S_k,V\setminus S_k)}\left(f_+^2(x_{k+1})-f_+^2(x_k)\right)\right)^2\\
&\succeq 2\left(\sum_{k=n_0}^{N-1}{h(f)b(S_k)}\left(f_+^2(x_{k+1})-f_+^2(x_k)\right)\right)^2\\
&= 2h^2(f)\left(\sum_{k=n_0}^{N-1}{\sum_{i=k+1}^{N}b(x_i)}\left(f_+^2(x_{k+1})-f_+^2(x_k)\right)\right)^2\\
&=[\mbox{change limits of summation for $i$ and $k$}]\\
&=2h^2(f)\left(\sum_{i=n_0+1}^{N}{\sum_{k=n_0}^{i-1}b(x_i)}\left(f_+^2(x_{k+1})-f_+^2(x_k)\right)\right)^2\\
&=2h^2(f)\left(\sum_{i=n_0+1}^{N}{b(x_i)}\left(f_+^2(x_i)-f_+^2(x_{n_0})\right)\right)^2\\
&=2h^2(f)\left(\sum_{i=n_0+1}^{N}{b(x_i)}f_+^2(x_i)\right)^2=2h^2(f)\left(\sum_{x\in V}{b(x)}f_+^2(x)\right)^2,
\end{align*}
where we have used several times that $f_+(x_k)=0$ for any $k\le n_0$.
Applying the last estimate to \eqref{W2} we obtain 
$$
W \succeq \dfrac{2 h^2(f)}{(4-2W)},
$$
from where immediately follows the statement of the lemma.
\end{proof}

\begin{proof}[Proof of Theorem \ref{thmlambda}]
The proof follows exactly the same outline as proof of the Theorem 3.2 in \cite{BauerJost} and we do not repeat it here. 
Note that the facts that 
$$
\lambda_{N-1}=\sup_{f\ne 0}\dfrac{\langle \L f,f\rangle}{\langle f,f\rangle}=\max_{f\ne 0}\dfrac{\langle \L f,f\rangle}{\langle f,f\rangle},
$$ 
where maximum is attained on the corresponding eigenfunction and
$$
2-\lambda_{N-1}=\inf_{f\ne 0}\dfrac{\langle (2\mathcal I-\L) f,f\rangle}{\langle f,f\rangle}, 
$$ 
where $\mathcal I:\F_V\rightarrow \F_V$ is the identical operator, follow from decomposition of $f$ by orthonormal basis, consisting of eigenfunctions of $\L$ (for more details see \cite{MuranovaPreprint22}).

\end{proof}

\section{Examples}\label{LC}
\subsection{Levi-Civita field}
The Levi-Civita field was  introduced by Tullio Levi-Civita in 1862 \cite{LeviCivita}.
The Levi-Civita field is the smallest non-Archimedean real-closed ordered field, which is Cauchy complete in the order topology. Moreover, it has a subfield, isomorphic to the field of rational functions, whose elements naturally appear as edge weights in the theory of electrical networks (see e.g. \cite{Muranova1, Muranova2}).

Firstly, we recall the definition and main notations for the Levi-Civita field $\mathcal R$ (see e.g. \cite{Berz, Hall, Shamseddine, Shamseddinethesis, ShamseddineBerz}).

\begin{definition}
A subset $M$ of the rational numbers $\Bbb Q$ is called \emph{left-finite} if for every number $r\in \Bbb Q$ there are only finitely many elements of $M$ that are smaller than $r$. 
\end{definition}

\begin{definition}\cite{LeviCivita}\label{LCfield}
We define the Levi-Civita field $\mathcal R$ as a field of formal power series
\begin{equation}
a=\sum_{i=0}^\infty a_i\epsilon^{q_i},
\end{equation}
with $a_i\in \Bbb R$ and $Q=\{q_i\}_{i=0}^\infty\subset \mathbb Q$ being left-finite.
\end{definition}
Addition and multiplication are defined naturally as for formal power series. The order is defined as follows: $a\succ 0$ if $a_0>0$.
 
Note that $\epsilon$ here is considered as a `fixed' variable. It is infinitesimal, as well as e.g. $\epsilon^2$, $5\epsilon^\frac{1}{2}+2\epsilon$, etc.

From the definition of elements of the Levi-Civita field follows that to prove the convergence $r_n\to r\in \R$ it is enough to prove, that for any $m\in \Bbb N$ exists $N_0\in \Bbb N$ such that
$|r_n- r|<\epsilon^m$ for any $n>N_0$.
\subsection{Examples of graphs over $\mathcal R$}
\begin{example}
Let us consider a family of complete graphs $V(n)$ with $3$ vertices and weights as at the Figure \ref{fig:example} (i.e. $b(x,y)=\epsilon^n$ in $V(n)$).

\begin{figure}[H]
\centering

\begin{tikzpicture}[scale=0.75]

    \draw [line width=1.5pt, fill=gray!2] (0,0) -- (60:4) -- (4,0) -- cycle;

    \coordinate[label=left:$x$]  (x) at (0,0);
    \coordinate[label=right:$y$] (y) at (4,0);
    \coordinate[label=above:$z$] (z) at (2,3.464);

    \coordinate[label=below:$\epsilon^n$](c) at ($ (x)!.5!(y) $);
    \coordinate[label=left:$1$] (b) at ($ (x)!.5!(z) $);
    \coordinate[label=right:$1$](a) at ($ (y)!.5!(z) $);

\end{tikzpicture}
%
%
%
%
%
%
%
\caption{}
\label{fig:example}
\end{figure}
 One can check, that non-constant eigenvectors of the Laplacian are 

$$
v_1(n)=(-1,1,0) \mbox { and }
v_2(n)=\left(-\dfrac{1}{1+\epsilon^n},-\dfrac{1}{1+\epsilon^n},1\right)
$$
with the corresponding eigenvalues 
$$\lambda_1(n)=\dfrac{1+2\epsilon^n}{1+\epsilon^n} \mbox{ and }
\lambda_2(n)=\dfrac{2+\epsilon^n}{1+\epsilon^n}=2-\epsilon^n+\epsilon^{2n}-\epsilon^{3n}\dots,
$$
where for $\lambda_2(n)$ we have represented ratio as formal power series.

Considering all the posibilities, one can see that the dual Cheeger constant
$$
\overline h(n)=\dfrac{2}{2+\epsilon^n}=1-\dfrac{\epsilon^n}{2}+\dfrac{\epsilon^{2n}}{4}-\dfrac{\epsilon^{3n}}{8}\dots
$$
is attained on the partition of vertices $\{x,y\}\cup\{z\}$.
Then 
$$
1+\sqrt{1-(1-\overline h(n))^2}=2-\dfrac{\epsilon^{2n}}{8}+\dfrac{\epsilon^{3n}}{8}\dots.
$$
Note that the difference 
$$
1+\sqrt{1-(1-\overline h(n))^2}-2\overline h(n)\to 0, \mbox{ as }n\to\infty
$$
in a sence of the ordered topology. Indeed, the graph $V(n)$ differ from a bipartite graph by the edge $x\sim y$, whose weight $b(x,y)=\epsilon^n$  is infinitesimal and the weights converge to $0$ as $n\to\infty$.
\end{example}
\begin{example}
Let us consider a complete graph $V$ with $N=2K$ vertices and the following edge weights: let fix a partition of vertices $V=V_1\cup V_2$ with $\#V_1=\#V_2=K$ and let $b(x,y)=\epsilon^n$ (where  $n\in \Bbb N$ is fixed) if $x,y$ belong to the same $V_i$ and $b(x,y)=1$ otherwise. Firstly we calculate the dual Cheeger constant for this graph. For any $x\in V$ we have $b(x)={K+(K-1)\epsilon^n}$. Let $U_1, U_2$ be disjoint subsets of $V$.

\textbf{1 case.} Let $\#U_1=K_1> K$ and then $\#U_2=K_2< K$. In this case
$$
\overline h(U_1,U_2):=\dfrac{2\sum_{x\in U_1,y\in U_2}b(x,y)}{(K_1+K_2)(K+(K-1)\epsilon^n)}
$$
is maximazed for fixed $K_1, K_2$ as
$$
\overline h(K_1,K_2):=\dfrac{K K_2+(K_1-K_2)\epsilon^n}{(K_1+K_2)(K+(K-1)\epsilon^n)},
$$
i.e. when $U_1\supset V_1$ (or $U_1\supset V_2$). Further,
\begin{align}
\overline h(K_1,K_2)=&\dfrac{2(K K_2+(K_1-K)\epsilon^n)}{(K_1+K_2)(K+(K-1)\epsilon^n)}\\
\preceq& \dfrac{2K (K_2+1)}{(K+1+K_2)(K+(K-1)\epsilon^n)},
\end{align}
since $K_1\succeq K+1$ and $(K_1-K)\epsilon^n$ is infinitesimal. Therefore,
$$
\overline h(K_1,K_2)\le \overline h(V_1,U),
$$
where $\#U=K_2+1, U\subset V_2$ and it is enough to consider the following case.

\textbf{2 case.} Let $\#U_1=K_1\leq K$ and $\#U_2=K_2\leq K$. If
$$
\overline h(U_1,U_2):=\dfrac{2\sum_{x\in U_1,y\in U_2}b(x,y)}{(K_1+K_2)(K+(K-1)\epsilon^n)}
$$
then the maximum of $\overline h(U_1,U_2)$ for fixed $K_1,K_2$ is attained when all the edges between $U_1$ and $U_2$ have weight $1$ (which is possible for $U_1\subset V_1$ and $U_2\subset V_2$). This maximum is
$$
\overline h(K_1,K_2)=\dfrac{2K_1 K_2}{(K_1+K_2)(K+(K-1)\epsilon^n)}
$$
and further, maximazing the ratio ${K_1 K_2}/(K_1+K_2)$ gives
$$
\max_{K_1\leq K, K_2\leq K}\overline h(K_1,K_2)=\dfrac{2 K^2}{2K(K+(K-1)\epsilon^n)}=\dfrac{K}{K+(K-1)\epsilon^n},
$$
where the maximum is attained on $K_1=K_2=K$. 

From all the above, the dual Cheeger constant for the given graph is
\begin{align*}
\overline h&=\dfrac{K}{K+(K-1)\epsilon^n}\\
&=1-\left(\frac{K-1}{K}\right)\epsilon^n+\left(\frac{K-1}{K}\right)^2\epsilon^{2n}+\left(\frac{K-1}{K}\right)^3\epsilon^{3n} \dots,
\end{align*}
where we have represented ratio as formal power series.

Therefore, taking $n$ large enough we can get $|1-\overline h|\prec r$ for any given $r\in \mathcal R$. It is an intuitively clear result, since such a graph is close to bipartite (all edges in the same part of partition have weight $\epsilon^n$, while all edges between $V_1$ and $V_2$ have weight $1$).

The matrix of Laplacian in this case is 
$$
\frac{1}{B}
 \left(
    \begin{array}{ccccc|ccccc}
      B&-\epsilon^n&\dots&-\epsilon^n&-\epsilon^n& -1&-1&\dots&-1&-1\\
      -\epsilon^n&B&\dots&-\epsilon^n&-\epsilon^n& -1&-1&\dots&-1&-1\\
	\dots&\dots&\dots&\dots&\dots&\dots&\dots&\dots&\dots&\dots\\
    -\epsilon^n&-\epsilon^n&\dots&B&-\epsilon^n& -1&-1&\dots&-1&-1\\
   -\epsilon^n&-\epsilon^n&\dots&-\epsilon^n&B& -1&-1&\dots&-1&-1\\
      \hline
      -1&-1&\dots&-1&-1&B&-\epsilon^n&\dots&-\epsilon^n&-\epsilon^n\\
      -1&-1&\dots&-1&-1&-\epsilon^n&B&\dots&-\epsilon^n&-\epsilon^n\\
	\dots&\dots&\dots&\dots&\dots&\dots&\dots&\dots&\dots&\dots\\
      -1&-1&\dots&-1&-1&-\epsilon^n&-\epsilon^n&\dots&B&-\epsilon^n\\
      \multicolumn{5}{c|}{\underbrace{-1\;\;\;\;-1\;\;\;\dots\;\;\;-1\;\;-1}_{K}}&\multicolumn{5}{c}{\underbrace{-\epsilon^n\;\;-\epsilon^n\;\;\dots\;\;\;-\epsilon^n\;\;\;\;B}_{K}}\\
    \end{array}
    \right),
$$
where $B=b(x)={K+(K-1)\epsilon^n}$. Then the eigenvectors are:
\begin{itemize}
\item
$
v_0=(1,1,\dots,1,1)
$
with an eigenvalue $\lambda_0=0$;
\item
$
v_m=(-1,0,\dots, 0,1, 0,\dots, 0),$ where $1$ stays at the place $(m+~1)$ for $m=1,\dots,K-1$ with eigenvalues $$\lambda_m=\dfrac{K+K\epsilon^n}{K+(K-1)\epsilon^n};
$$
\item
$
v_m=(\underbrace{0,0,0\dots 0}_K,-1,0,\dots, 0,1, 0,\dots,0), 
$
where $1$ stays at the place $(m+2)$ for $m=K, \dots, N-2$
with the same eigenvalues as in the previous case
$$\lambda_m=\dfrac{K+K\epsilon^n}{K+(K-1)\epsilon^n};
$$
\item$
v_{N-1}=({\underbrace{1,1,\dots,1,1}_{K}}, {\underbrace{-1,-1,\dots,-1,-1}_{K}}),
$
with the eigenvalue $$\lambda_{N-1}=\dfrac{2K}{K+(K-1)\epsilon^n}.
$$
\end{itemize}
Note that $\lambda_{N-1}=2\overline h$ and can be written as formal power series as
\begin{align*}
\lambda_{N-1}=2-2\frac{K-1}{K}\epsilon^n+2\left(\frac{K-1}{K}\right)^2\epsilon^{2n}-2\left(\frac{K-1}{K}\right)^3\epsilon^{3n} \dots.
\end{align*}
From the other hand,
\begin{align*}
 1+\sqrt{1-(1-\overline h)^2}&=1+\sqrt{\dfrac{K^2+2K(K-1)\epsilon^n}{(K+(K-1)\epsilon^n)^2}}\\
\\
&=2-\frac{(K-1)^2}{2 K^2}\epsilon^{2n}+\frac{(K-1)^3}{K^3}\epsilon^{3n} \dots.
\end{align*}
Therefore, the estimate
$$
\lambda_{N-1}\preceq  1+\sqrt{1-(1-\overline h)^2}
$$
is precise up to the order $\epsilon^n$ for the considered graph and the difference
$$
1+\sqrt{1-(1-\overline h)^2} - \lambda_{N-1}
$$
converge to $0$ in a sence of convergence in order topology in $\R$ if we consider a family of graphs with $n\to\infty$. 

\end{example}

\end{document}